\numberwithin{equation}{section}
\newtheorem{theorem}{Theorem}[section]
\newtheorem{lemma}[theorem]{Lemma}
\newtheorem{definition}[theorem]{Definition}
\theoremstyle{definition}
\newtheorem{remark}[theorem]{Remark}
\newtheorem{example}[theorem]{Example}
\numberwithin{equation}{section}
\newcounter{alphabet}
\newcounter{tmp}
\newenvironment{Thm}[1][]{\refstepcounter{alphabet}%
	\bigskip%
	\noindent%
	{\bf Theorem \Alph{alphabet}}%
	\ifthenelse{\equal{#1}{}}{}{ (#1)}%
	{\bf .} \itshape}{\vskip 8pt}
\begin{document}
	\title[REVISIT OF MEROMORPHIC CONVEX FUNCTIONS]
	{REVISIT OF MEROMORPHIC CONVEX FUNCTIONS}

	\author[Vibhuti Arora]{Vibhuti Arora}
	\address{Vibhuti Arora, Department of Mathematics, National Institute of Technology Calicut, 673 601, India.}
	\email{vibhutiarora1991@gmail.com, vibhuti@nitc.ac.in}

	\author[Vinayak M.]{Vinayak M.}
	\address{Vinayak M., Department of Mathematics, National Institute of Technology Calicut, 673 601, India.}
	\email{vinayak\_p220286ma@nitc.ac.in, mvinayak.math@gmail.com}
	
	\newcommand{\D}{{\mathbb D}}
	\newcommand{\C}{{\mathbb C}}
	\newcommand{\real}{{\operatorname{Re}\,}}
	\newcommand{\Log}{{\operatorname{Log}\,}}
	\newcommand{\Arg}{{\operatorname{Arg}\,}}
	\newcommand{\ds}{\displaystyle}
	
	\keywords{Convex function, Meromorphic function, Starlike function, Univalent function, Schwarzian derivative}  
	\subjclass[2020]{26A15, 30D30, 30C55, 30C45} 
	
	\begin{abstract}
			Our primary aim is to explore a sufficient condition for the class of meromorphically convex functions of order $\alpha$, where $0 \leq \alpha < 1$. The investigation will focus on studying a class of continuous functions defined on 
$[0,1)$, and analyzing the properties of the Schwarzian norm of locally univalent meromorphic functions. Moreover, a new subclass of meromorphic functions is also introduced, and some of its characteristics are examined.
	\end{abstract}
	
	\maketitle
\section{Introduction}
We begin with the definition of the class $\mathcal{A}$ which consists of analytic functions $f$ defined on $\mathbb{D}:=\{z \in \mathbb{C}: |z|<1\}$  satisfying the normalization condition $ f(0)=0 \text{ and } f'(0)=1.$ Due to the well-known Reimann mapping theorem, the univalent function theory plays an important role in understanding the geometric behaviour of analytic functions. The famous de Branges theorem \cite{DE85}, which gives a necessary condition for the univalency of analytic functions, attracted many researchers. Later, finding the necessary and sufficient conditions for functions to be in the class $\mathcal{S}$ and its subclasses became a major area of research in this field. 
The {\em class of convex functions} $\mathcal{C}$ ({\em  starlike functions} $\mathcal{S}^*$) containing functions in $\mathcal{S}$ whose image is a convex set (starlike set with respect to $0$), 
are some of the important subclasses of $\mathcal{S}$.
See \cite{ DUR83, POM75, GOO83, TIT39, MAM94} for a detailed exploration of these topics.

\medskip

In 1936, Robertson \cite{ROB36} introduced a subclass of $\mathcal{C}$ and $\mathcal{S}^*$ as follows. For $\alpha \in [0,1]$, a  function $f \in \mathcal{S}$ is said to be {\em convex function of order $\alpha$} if  
$$\real\left(1+\dfrac{zf''(z)}{f'(z)}\right)\geq\alpha,$$
and is said to be {\em starlike function of order $\alpha$} if 
$$\real\left(\dfrac{zf'(z)}{f(z)}\right)\geq\alpha.$$ The respective classes are denoted by $\mathcal{C}(\alpha)$ and $\mathcal{S}^*(\alpha)$. Due to the well-known Alexander characterization \cite[p. 43]{DUR83}, $f \in \mathcal{C}(\alpha)$ if and only if $zf' \in \mathcal{S}^*(\alpha)$. Note that when $\alpha=0$, the above two classes reduce to $\mathcal{C}$ and $\mathcal{S}^*$, respectively. 

\medskip

A function that is analytic except possibly at poles is called {\em meromorphic function}. In fact, every meromorphic function is analytic almost everywhere. Our main interest is to study the class $\mathcal{B}$ of meromorphic functions defined in $\mathbb{D}$ having the form 
\begin{equation}\label{MEROFUNLAURENT}
    f(z) = \dfrac{1}{z}+a_0+a_1z+ \cdots .
\end{equation} 
We denote by $\mathcal{M}$ the class of univalent functions in $\mathcal{B}$.
Note that every function in $\mathcal{B}$ is analytic on $\mathbb{D} \setminus \{0\}$. Also, it is well-known that a non-zero function $f$ is analytic in $\mathbb{D}$ if and only if $1/f$ is meromorphic in $\mathbb{D}$. More precisely, $f \in \mathcal{S}$ if and only if $1/f \in \mathcal{M}$. 

\medskip

The concept of Schwarzian derivative attained considerable attention in finding necessary and sufficient conditions for functions to be in the class $\mathcal{S}$ and its various subclasses. For an analytic locally univalent function $f$ (in other words, $f'(z)\ne 0 $ in $\mathbb{D}$), the function $S_f$ defined by
\begin{equation*}
    S_f(z) :=\left(\dfrac{f''(z)}{f'(z)}\right)' - \dfrac{1}{2} \left(\dfrac{f''(z)}{f'(z)}\right)^2
\end{equation*}
and is called the {\em Schwarzian derivative} of $f$ at the point $z$. The study of $S_f$ is highly significant due to its invariance property under any bilinear transformation. More precisely, if $T$ is any bilinear transformation, then
\begin{equation}\label{SCHWARZINVARIANT}
    S_{T\circ f} = S_f.
\end{equation}
Note that $S_f=0$ if and only if $f$ is a bilinear transformation.
 The norm of $S_f$ can be defined as
\begin{equation*}
    \lVert S_f \rVert := \sup_{z\in \mathbb{D}}(1-|z|^2)^2|S_f(z)|.
\end{equation*}
Several necessary and sufficient conditions for certain classes of functions have been obtained by using the above norm. In 1948, Nehari\cite[Theorem 1]{NEH49} found a sufficient condition which states that if 
$\lVert S_f \rVert \leq 2,$ for analytic function $f$ then $f$ is univalent in $\mathbb{D}$.
Hille \cite{HIL49} showed that the constant $2$ is the best possible. Conversely, in \cite{NEH49}, it has been proved that if $f$ is univalent in $\mathbb{D}$, then $\lVert S_f \rVert \leq 6$, which is also sharp. Later, in \cite{NEH76}, it is shown that $\lVert S_f \rVert \leq 2$ becomes a necessary criterion for a function to be convex. Moreover, the inequality becomes strict for any bounded convex function. A simple proof of this can be found in \cite{CHU11}. Recently, Kim et al. \cite{KIM22} obtained sufficient conditions for a locally univalent function to be starlike and convex in terms of $S_f$. 

\medskip

The invariant nature of the Schwarzian derivative given by the relation \eqref{SCHWARZINVARIANT} helps us to conclude that the estimates based on the Schwarzian derivative can
be extended to the case of meromorphic functions. Note that simple computations demonstrate  $S_f = S_{1/f}$. This naturally leads to the following question;  {\em can concepts like univalency and geometric properties such as convexity and starlikeness be applied to meromorphic functions?}

\medskip

Subsequently, numerous researchers have addressed the question of understanding the analytic behavior of meromorphic functions and their subclasses by estimating their Schwarzian derivative. Yamashita \cite{YAM79} obtained sufficient conditions for the univalency of meromorphic functions in $\mathbb{D}$ by estimating the upper bound for the line integral of the Schwarzian derivative. In 1956, Haimo \cite{HAI56} considered a bound for $S_f$ using the techniques in ordinary linear differential equations and established the following sufficient condition for a function to be {\em meromorphically convex}, i.e. the function which maps $\mathbb{D}$ onto the complement of a convex domain.

\begin{Thm}
    Let $f \in \mathcal{B}$ be a locally univalent function with 
    $$|S_f(z)| \leq 2q(|z|), \quad |z|<1,$$
where $q$ is a positive continuous function on $[0,1)$ having the property that the differential equation
\begin{equation}\label{DIFFEQTHMA}
            y''+qy=0 
        \end{equation}
        has a non vanishing solution in $(0,1)$,  satisfies $y(0)=0$, and
\begin{equation}\label{INEQTHMA}
    \lim _{x\to 1}\dfrac{xy'(x)}{y(x)} \geq \dfrac{1}{2}.
\end{equation}
        Then $f$ maps $\mathbb{D}$ onto the complement of a convex domain. The result is sharp in the sense that if $q$ is analytic in $\mathbb{D}$ with $|q(z)| \leq q(|z|)$, then the constant $2$ is the sharpest one.
\end{Thm}

In the proof of the above theorem by Haimo, the sharpness part had some issues. Recently, Chuaqui et al. \cite{CHUA11} studied the sufficient criterion for the meromorphically convex function, which is similar to Theorem A. They modified the sharpness criterion by resolving the issue in the paper \cite{HAI56} and also obtained a considerably easier proof. 
 The sufficient conditions for meromorphically convex functions of order $\alpha$ are studied by Arora et al. in \cite{ARO18}. For more articles regarding this, we refer \cite{CLU59, MIL70, NUN01, POM63, TOT10, MIL00}.

\medskip

This paper aims to extensively explore the class $\mathcal{B}$ of meromorphic functions. 
 In our study, we use the ideas from the proof of theorems mentioned above from \cite{HAI56, CHUA11} to establish a sufficient condition in terms of the Schwarzian derivative for the class of meromorphic convex functions of order $\alpha$, which we denote by $\mathcal{BC}(\alpha)$. Our study also introduces a subclass of meromorphic functions, namely the class of meromorphic inverse convex functions of order $\alpha$, denoted by $\mathcal{BC}_I(\alpha)$. We try to explore some of the inclusion properties and provide an analytic characterization of the class $\mathcal{BC}_I(\alpha)$. Additionally, we establish a connection between $\mathcal{BC}_I(\alpha)$ and the class of meromorphic starlike function of order $\alpha$,  $\mathcal{BS}^*(\alpha)$. Furthermore, the radius of inverse conversity for the functions in the class $\mathcal{B}$ is obtained. 

\section{Definitions and main results} 

This section deals with subclasses of the class 
$\mathcal{B}$, namely the class of meromorphically convex functions of order $\alpha$ and meromorphically inverse convex functions of order $\alpha$.

\subsection{Meromorphically convex functions of order $\alpha$ }
 Analogous to the characterization for an analytic convex function of order $\alpha$, a meromorphic function of the form (\ref{MEROFUNLAURENT}) is defined to be convex of order $\alpha$ as follows (see \cite[p. 235]{GOO83}).

\begin{definition}\label{CONVEXMERO} 
  Let $\alpha \in [0,1)$. A function $f \in \mathcal{B}$ with $f'(z)\neq0$ in $\mathbb{D}\setminus \{0\}$ is said to be meromorphically convex of order $\alpha$ if   
  \begin{equation*}
      \real\left(1+ \dfrac{zf''(z)}{f'(z)}\right)\leq-\alpha.
  \end{equation*}
  We denote the class of such functions by $\mathcal{BC}(\alpha)$.
\end{definition}
Similarly, we can define meromorphically starlike function of order $\alpha$.
\begin{definition}\label{STARLIKEMERO}
  Let $\alpha \in [0,1)$. A function $f \in \mathcal{B}$ with $f(z) \neq 0$ in $\mathbb{D} \setminus \{0\}$ is said to be meromorphically starlike of order $\alpha$ if   
 \begin{equation*}
      \real\left( \dfrac{zf'(z)}{f(z)}\right)\leq-\alpha.
  \end{equation*}
  We denote the class of such functions by $\mathcal{BS}^*(\alpha)$.
\end{definition}
\begin{remark}
    For $\alpha=0$, the above two classes in  Definitions \ref{CONVEXMERO} and \ref{STARLIKEMERO} will reduce to the class of meromorphically convex and starlike functions, respectively (see  \cite[p. 47]{POM75}).  Moreover, in this case, $f$ maps $\mathbb{D}$ onto the complement of a convex domain (starlike domain with respect to 0, respectively).
\end{remark}
 The following is an example of a meromorphically convex function of order $1/2$.
\begin{example}
    Define the meromorphic function 
    $$f(z):=\dfrac{z}{4}+\dfrac{1}{z}, \, z \in \mathbb{D},$$
    which is clearly univalent. Simple computations give
    $$\real\left(1+\dfrac{zf''(z)}{f'(z)}\right) = \real\left(\dfrac{z^2+4}{z^2-4}\right)<-\dfrac{1}{2}.$$
    That is, $f$ is meromorphically convex of order 1/2.
    
\end{example}
Now, we are interested in obtaining a sufficient condition for a meromorphic function to be in the class $\mathcal{BC}(\alpha)$. Surprisingly, most of the necessary conditions for functions to be univalent, starlike, convex, etc., are proved using the fundamental results in the theory of complex-valued functions but sufficient conditions are obtained using the
theory of ordinary differential equations. Thus,
the following class of functions introduced in \cite{CHUA11} is an integral part of our study.
\begin{definition}
   For $\alpha \geq 0$, the class $\mathcal{P}(\alpha)$  is defined as the class of continuous functions $q_\alpha$ on $[0,1)$ with the following properties
    \begin{enumerate}[label=(\alph*)]
        \item For $r \in [0,1), \, q_\alpha(r) \geq 0.$
        \item For the differential equation
        \begin{equation}\label{DIFFEQ}
           y''+q_\alpha y=0 
        \end{equation}
        satisfying $y(0)=0$ and $y'(0)=1$, the solution is positive on $(0,1)$ with 
        \begin{equation*}
            \lim _{x \to 1^-} \dfrac{y'(x)}{y(x)} \geq \alpha.
        \end{equation*}
    \end{enumerate}
\end{definition}

    \begin{remark}
         From the definition itself, it is obvious that the class $\mathcal{P}(\alpha)$ satisfies the inclusion 
    $$\mathcal{P}(\alpha) \subset \mathcal{P}(\beta), \text{ for } \alpha>\beta.$$ 
    
    \end{remark}

    Next, we state our first main result, which gives a sufficient condition for a meromorphic function to be in the class $\mathcal{BC}(\alpha).$

\begin{theorem}\label{MAINT1}
Let $\alpha \in [0,1)$ and $q_\alpha \in \mathcal{P}((1+\alpha)/2)$. Assume  $f \in \mathcal{B}$ be a locally univalent function satisfying the condition  $|S_f(z)| \leq 2 q_\alpha(|z|)$. Then $f \in \mathcal{BC}(\alpha)$. Also, the constant $(1+\alpha)/2$ is the best possible.
\end{theorem}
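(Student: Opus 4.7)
My plan is to adapt the Schwarzian/ODE technique of Haimo and Chuaqui to the order-$\alpha$ setting, reducing the convexity condition to a harmonic positivity statement and then establishing it by combining a radial Sturm comparison with the minimum principle. To set up, since $f$ is locally univalent meromorphic with a simple pole at $0$, I write $f=u_{2}/u_{1}$ for linearly independent solutions of $u''+(S_{f}/2)u=0$, normalized by $u_{1}(0)=0$, $u_{1}'(0)=1$, so that $u_{1}$ has only the simple zero at $z=0$ in $\mathbb{D}$ (any other zero would produce a further pole of $f$, contradicting $f\in\mathcal{B}$). From $f'=W/u_{1}^{2}$ (with constant Wronskian $W$) one gets $f''/f'=-2u_{1}'/u_{1}$, and hence
\[
1+\frac{zf''(z)}{f'(z)}=1-2p(z),\qquad p(z):=\frac{zu_{1}'(z)}{u_{1}(z)}.
\]
The function $p$ is holomorphic on $\mathbb{D}$ with $p(0)=1$, and the definition of $\mathcal{BC}(\alpha)$ is equivalent to the single inequality $\real p(z)\ge (1+\alpha)/2$ on $\mathbb{D}$.

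The second step is a radial Sturm comparison. Fix $\theta$, put $h(r)=u_{1}(re^{i\theta})$, $\sigma(r)=|h(r)|$, and write $h=\sigma e^{i\psi}$. Extracting the real part of $h''=-(e^{2i\theta}S_{f}/2)h$ gives $\sigma''=\sigma(\psi')^{2}-\real(e^{2i\theta}S_{f}/2)\sigma$, so the hypothesis $|S_{f}|\le 2q_{\alpha}$ yields $\sigma''+q_{\alpha}\sigma\ge 0$ on $(0,1)$ with $\sigma(0)=0$, $\sigma'(0)=1$. Let $y$ be the positive solution of $y''+q_{\alpha}y=0$ on $(0,1)$ with $y(0)=0$, $y'(0)=1$ supplied by $q_{\alpha}\in\mathcal{P}((1+\alpha)/2)$. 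The Wronskian $W_{c}=y\sigma'-y'\sigma$ satisfies $W_{c}(0)=0$ and $W_{c}'=y\sigma''-y''\sigma\ge 0$, so $W_{c}\ge 0$ and hence $\sigma'/\sigma\ge y'/y$. Multiplying by $r$ gives the $\theta$-uniform estimate
\[
\real p(re^{i\theta})=\frac{r\sigma'(r)}{\sigma(r)}\ \ge\ \frac{r y'(r)}{y(r)}.
\]

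The delicate step is to pass from this radial estimate to the required pointwise bound, because a priori $ry'(r)/y(r)$ may drop below $(1+\alpha)/2$ on a sub-interval of $(0,1)$. Here I invoke the minimum principle: $\real p$ is harmonic on $\mathbb{D}$, so its minimum over the closed disc $\{|z|\le r\}$ is attained on $|z|=r$, forcing $m(r):=\min_{|z|=r}\real p(z)$ to be non-increasing in $r$. The Riccati identity $(y'/y)'=-q_{\alpha}-(y'/y)^{2}\le 0$ shows $y'/y$ is monotonically decreasing, so $\lim_{r\to 1^{-}}y'/y$ exists and is $\ge (1+\alpha)/2$ by hypothesis. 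Combining,
\[
\lim_{r\to 1^{-}}m(r)\ \ge\ \lim_{r\to 1^{-}}\frac{ry'(r)}{y(r)}\ \ge\ \frac{1+\alpha}{2},
\]
and monotonicity of $m$ forces $m(r)\ge (1+\alpha)/2$ on $[0,1)$. Therefore $\real p\ge (1+\alpha)/2$ on $\mathbb{D}$ and $f\in\mathcal{BC}(\alpha)$.

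For sharpness I would follow the Chuaqui-type construction: pick a borderline weight $q_{\alpha}$ with $\lim_{r\to 1^{-}}y'/y=(1+\alpha)/2$ exactly, solve $u''+(q_{\alpha}/2)u=0$ explicitly, and use the resulting $u_{1},u_{2}$ to build a locally univalent $f\in\mathcal{B}$ whose Schwarzian satisfies $|S_{f}(z)|=2q_{\alpha}(|z|)$ along a fixed radial direction and for which $\real(1+zf''/f')\to -\alpha$ as $|z|\to 1^{-}$ along that radius; replacing $(1+\alpha)/2$ by any strictly smaller constant then admits an analogous example outside $\mathcal{BC}(\alpha)$. The main obstacle I anticipate in the positivity argument is precisely the possible gap between $ry'/y$ and its boundary limit in the interior of $(0,1)$, which a naive radial-only argument cannot close; the monotonicity of $m(r)$ supplied by the minimum principle is the key device that lets the boundary estimate propagate back into the disc.
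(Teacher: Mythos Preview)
Your main argument is correct and takes a genuinely different route from the paper. Both proofs reduce to showing $u_{1}\in\mathcal{S}^{*}((1+\alpha)/2)$ and both finish with the minimum principle, but the core estimate is obtained differently. The paper integrates $u_{1}''\overline{u_{1}}+p|u_{1}|^{2}=0$ along a ray to get an integral identity for $\real(zu_{1}'/u_{1})$, and then invokes a separate Haimo-type integral inequality (their Lemma~3.1) applied to the real and imaginary parts of $u_{1}$. You instead work directly with $\sigma=|u_{1}(re^{i\theta})|$: the polar decomposition gives $\sigma''+q_{\alpha}\sigma\ge 0$, and a one-line Wronskian comparison with the comparison solution $y$ yields $\sigma'/\sigma\ge y'/y$ pointwise. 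This Sturm-type argument is shorter and avoids the auxiliary integral lemma entirely; the paper's route, on the other hand, stays closer to Haimo's original method and makes the role of the $\epsilon$--$\delta$ limit in the definition of $\mathcal{P}((1+\alpha)/2)$ more explicit.

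Your sharpness discussion, however, is only a sketch and would not be accepted as a proof. The task is to show that if $(1+\alpha)/2$ is replaced by any $\beta<(1+\alpha)/2$, there exist $q\in\mathcal{P}(\beta)$ and a locally univalent $f\in\mathcal{B}$ with $|S_{f}(z)|\le 2q(|z|)$ but $f\notin\mathcal{BC}(\alpha)$. The paper does this concretely: for fixed $\beta$ it takes $q_{\beta}(z)=(1-\beta)(n+1)z^{n}$ (which lies in $\mathcal{P}(\beta)$ by the integral criterion), sets $f(z)=-\int_{\omega}^{z}y^{-2}$ with $y$ the normalized solution of $y''+q_{\beta}y=0$, so that $S_{f}=2q_{\beta}$ exactly, and then argues that for $n$ large enough there is a point $x_{0}\in(0,1)$ with $x_{0}y'(x_{0})/y(x_{0})\le\beta$, forcing $1+x_{0}f''(x_{0})/f'(x_{0})>-\alpha$. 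Your outline (``pick a borderline weight \ldots replacing $(1+\alpha)/2$ by any strictly smaller constant then admits an analogous example'') gestures at this but supplies no construction; also note the typo in your sketch---the ODE for the extremal $u$ should be $u''+q_{\alpha}u=0$, not $u''+(q_{\alpha}/2)u=0$, since $S_{f}/2$ is the coefficient and you want $S_{f}=2q_{\alpha}$ on the real axis.
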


\begin{remark}
    In particular for a fixed $\alpha \in [0,1)$, if we take $q_\alpha(z)=c_\alpha,$ a positive constant, then $q_\alpha \in \mathcal{P}(\alpha)$ if and only if the solution of the differential equation
    \begin{equation}\label{DIFFEQREMARK}
        y''+c_\alpha y=0, \, y(0)=0, \,  y'(0)=1
    \end{equation} 
    satisfies $y(x)>0$ in $(0,1)$ with 
    \begin{equation}\label{LIMEQREMARK}
        \lim_{x \to 1^-} \dfrac{y'(x)}{y(x)} =\dfrac{1+\alpha}{2}.
    \end{equation} 
Note that for the differential equation \eqref{DIFFEQREMARK}, the root is $$y(x)=\dfrac{\sin \sqrt{c_\alpha}x}{\sqrt{c_\alpha}}.$$ 
Next, we need to check whether the function $y$  satisfies \eqref{LIMEQREMARK}. It is simple to observe that
\begin{equation*}
        \lim_{x \to 1} \dfrac{y'(x)}{y(x)} = \lim_{x\to1^-} \sqrt{c_\alpha} \cot \sqrt{c_\alpha}x=\sqrt{c_\alpha} \cot \sqrt{c_\alpha}.
\end{equation*}
  It leads to
   $$\lim_{x \to 1} \dfrac{y'(x)}{y(x)} =\dfrac{1+\alpha}{2} \Leftrightarrow \sqrt{c_\alpha}-\dfrac{1+\alpha}{2} \tan \sqrt{c_\alpha}=0,$$
 and so in this case, Theorem \ref{MAINT1} coincides with \cite[Theorem $2.1$]{ARO18}.
\end{remark}

Next, we will illustrate Theorem \ref{MAINT1} using the following example.

\begin{example}
    Consider the function $f$ defined on $\mathbb{D}$, given by
    \begin{equation*}
        f(z) = \sqrt{\dfrac{1-\alpha}{\pi}} \cot \left( \sqrt{\dfrac{1-\alpha}{\pi}}z\right ).
    \end{equation*}
    As $\cot z$ is a meromorphic function with a simple pole at $0$, $f$ has the series expansion given by \eqref{MEROFUNLAURENT}. It is easy to verify that
    \begin{equation*}
        S_f(z) = \dfrac{2-2\alpha}{\pi} \left(\csc ^2\left( \sqrt{\dfrac{1-\alpha}{\pi}}z\right ) - \cot^2 \left( \sqrt{\dfrac{1-\alpha}{\pi}}z\right ) \right) = \dfrac{2-2\alpha}{\pi} =  2 \left(\dfrac{1-\alpha}{\pi}\right). \end{equation*}
        Next, we define the function $q_\alpha$ by 
         \begin{equation*}
            q_\alpha(x) = \dfrac{2-2\alpha}{\pi (1+x^2)}, ~ x \in [0,1).
        \end{equation*}
         Now note that for all $r \in (0,1)$, we have $\pi (1+r^2) \leq 2\pi$, and hence we deduce that
        \begin{equation*}
            \dfrac{2-2\alpha}{\pi (1+r^2)} \geq \dfrac{1-\alpha}{\pi}. 
        \end{equation*}
       This shows that
               \begin{equation*}
            |S_f(z)| = \dfrac{4-4\alpha}{\pi} \leq   \dfrac{4-4\alpha}{\pi (1+|z|^2)}=2q_\alpha(|z|), ~ z \in \mathbb{D}.
        \end{equation*}
        It is easy to verify that
        \begin{equation*}
            \int_0^1 q_\alpha(x) dx = \dfrac{2-2\alpha}{\pi} \tan^{-1}(1) = \dfrac{1-\alpha}{2}.
        \end{equation*}
Then by Lemma \ref{OURLEMMA2}, we get $q_\alpha \in \mathcal{P}(1+\alpha)/2.$ Observe that $f$ satisfies all the hypothesis in Theorem \ref{MAINT1} and as a result, we conclude that $f \in \mathcal{BC}(\alpha)$.

\medskip

Next, by the definition of the class $\mathcal{BC}(\alpha)$, we are going to show that $f \in \mathcal{BC}(\alpha)$. For that consider
\begin{equation*}
    f(z) = b_\alpha \cot \left(b_\alpha  z\right) =  b_\alpha \dfrac{u(z)}{v(z)},  
\end{equation*}
where
\begin{equation*}
  b_\alpha = \sqrt{\dfrac{1-\alpha}{\pi}}, ~  u(z) = \cos \left(b_\alpha  z\right), ~  ~ v(z) = \sin \left(b_\alpha z\right).
\end{equation*}
It is easy to verify that $u ~ \text{and} ~ v$ satisfies
\begin{equation*}
    u(0)=1, ~ u'(0)=0 ~ \text{and} ~ v(0)=0, ~ v'(0)=1,
\end{equation*}
which implies that the
Wronskian $W(u,v)(z) =1$ for all $z \in \mathbb{D}$. Simple computations give
\begin{align*}
    \real \left (1+\dfrac{zf''(z)}{f'(z)} \right)&= 1-2 \real \left(\dfrac{zv'(z)}{v(z)}\right) = 1- 2  \real \left(\dfrac{zb_\alpha \cos(b_\alpha z)}{\sin(b_\alpha z)}\right).
\end{align*}
By Lemma \ref{LEMMAFORFUNCTIONRATIO}, it is enough to prove 
\begin{equation}\label{IN1}
   \real \left(\dfrac{zb_\alpha \cos(b_\alpha z)}{\sin(b_\alpha z)}\right) \geq \dfrac{1+\alpha}{2}=\dfrac{2-\pi b_\alpha^2}{2}.
\end{equation}
By taking $z=x+iy \in \mathbb{D}$ with $x \geq 0, ~ y \geq 0$ and performing a straightforward computation, one can see that 
\begin{equation*}
     \real \left(\dfrac{zb_\alpha \cos(b_\alpha z)}{\sin(b_\alpha z)}\right)=\dfrac{b_\alpha x \cos \left(b_\alpha x \right) \sin \left(b_\alpha x \right)+b_\alpha y \sinh \left(b_\alpha y \right)\cosh \left(b_\alpha y \right) }{\sin^2\left(b_\alpha x\right)\cosh^2\left(b_\alpha y\right)+\sinh^2\left(b_\alpha y\right)\cos^2\left(b_\alpha x\right)}.
\end{equation*}
Then the inequality \eqref{IN1} will be equivalent to
\begin{align*}
    2b_\alpha x \cos(b_\alpha x)\sin(b_\alpha x) - (2-\pi b_\alpha^2) \cosh^2(b_\alpha y ) \sin(b_\alpha x) \cos (b_\alpha x) \tan (b_\alpha x) \\ \geq (2-\pi b_\alpha^2)  \cos^2(b_\alpha x) \sinh(b_\alpha y) \cosh(b_\alpha y) \tanh(b_\alpha y)-2b_\alpha y \sinh(b_\alpha y) & \cosh(b_\alpha y).
\end{align*}
That is,
\begin{align*}
    \cos(b_\alpha x)\sin(b_\alpha x) [2b_\alpha x -(1+\alpha) \cosh^2(b_\alpha y )\tan (b_\alpha x)] \\\geq\sinh(b_\alpha y) \cosh(b_\alpha y) [(1+\alpha) \cos^2(b_\alpha x)&\tanh(b_\alpha y)-2b_\alpha y].
\end{align*}
From the fact that $-\cosh^2(b_\alpha y)\tan(b_\alpha x) \leq -\tan(b_\alpha x)$ and $(1+\alpha)\tan (b_\alpha x) \geq \tan (b_\alpha x)$, we get
\begin{align}\label{IN2}
  \nonumber  \cos(b_\alpha x)\sin(b_\alpha x) [2b_\alpha x -\tan (b_\alpha x)] \\
    \geq \sinh(b_\alpha y) \cosh(b_\alpha y)& [(1+\alpha) \cos^2(b_\alpha x)\tanh(b_\alpha y)-2b_\alpha y].
\end{align}

\medskip

Define the function $g(y):= (1+\alpha)\tanh(b_\alpha y)-2b_\alpha y, ~ y \geq 0$. We have $\operatorname{sech}^2(y) \leq 1, ~\text{for all} ~ y \geq 0$, which implies that
$$g'(y) = (1+\alpha)b_\alpha \operatorname{sech}^2 (b_\alpha y)-2b_\alpha = b_\alpha[(1+\alpha)\operatorname{sech}^2 (b_\alpha y)-2]  <b_\alpha (-1+\alpha)<0.$$
This means that $g$ is a decreasing function in $[0,\infty)$ and as a result we have $g(y) \leq g(0) = 0.$
Since $\cos^2(b_\alpha x) \leq 1$, we have
$$(1+\alpha) \cos^2(b_\alpha x)\tanh(b_\alpha y)-2b_\alpha y \leq (1+\alpha) \tanh(b_\alpha y)-2b_\alpha y \leq 0.$$
By the definition itself, for all $x \geq 0$, we have $\sinh(b_\alpha y) \cosh(b_\alpha y) \geq 0.$ So the right-hand side of \eqref{IN2} is either negative or zero. 

\medskip
It is obvious that $2b_\alpha x -\tan (b_\alpha x)> 0$ and $ \cos(b_\alpha x)\sin(b_\alpha x)\geq 0$. Hence, we obtain that the left-hand side of \eqref{IN2} is always non-negative. So we conclude that on the first quadrant of the unit disk, we established the inequality \eqref{IN1}. Due to the symmetry of the unit disk about the real and imaginary axes, we obtained the desired inequality \eqref{IN1}.
\end{example}
\medskip

\subsection{Meromorphically inverse convex functions of order $\alpha$}
We start this section by recalling the following.
Let $f \in \mathcal{S}^*(\alpha)$ and take $g=1/f$. Then clearly $g \in \mathcal{M}$ and 
$$\real \left(\dfrac{zf'(z)}{f(z)}\right) =\real \left(\dfrac{-zg'(z)}{g(z)}\right) \geq \alpha.$$ 
which implies $g\in \mathcal{BS}^*(\alpha)$. That is, there is a one-to-one correspondence between the classes $\mathcal{S}^*(\alpha)$ and $\mathcal{BS}^*(\alpha)$. In other words, 
\begin{equation}\label{fand1/f}
    f \in \mathcal{S}^*(\alpha)  \Leftrightarrow 1/f \in \mathcal{BS}^*(\alpha).
\end{equation}
 However, it is important to note that the same argument is invalid for the case of convexity. The following example precisely discusses this.

\begin{example}
    Consider the function 
$l(z)=z/(1-z), \, z \in \mathbb{D},$ clearly $l\in \mathcal{C}$ but $l\notin \mathcal{C}(\alpha), \alpha\in (0,1)$.
Assume
$g(z)=1/l(z)=(1-z)/z,$
we get
$$\real \left(1+\frac{zg''(z)}{g'(z)}\right)=-1.$$
This implies that $g$ belongs to the class $\mathcal{BC}(\alpha)$ for all $\alpha\leq 1$. In particular, $g \in \mathcal{BC}(1/2)$ but $l$ is not in the class $\mathcal{C}(1/2)$.
\end{example}

A reasonable question is whether the reciprocal of functions in the class $\mathcal{C}(\alpha)$ exhibits any notable analytic properties. In \cite{TOT10}, the case $\alpha=0$ is examined, leading to the introduction of a new class, namely the class of meromorphically inverse convex functions;  which contains functions $f \in \mathcal{B}$ such that $g(z)f(z)=1, ~z \in \mathbb{D} \setminus \{0\}$, for some $g \in \mathcal{C}$.
\begin{definition}\label{DEFALPHAINVCONV}
    A function $f \in \mathcal{B}$ is said to be meromorphically inverse convex of order $\alpha$ in $\mathbb{D}\setminus\{0\}$ if there exists a function $g \in \mathcal{C}(\alpha)$ with
    $g(z)f(z) = 1, \text{ for } z \in \mathbb{D}\setminus\{0\}.$ We denote such class of functions by $\mathcal{BC}_{I}(\alpha)$.
\end{definition}
In this subsection, we discuss some of its analytic properties and also explore its association with existing classes. We also identify the largest disk on which a functions of the class $\mathcal{M}$ is in $\mathcal{BC}_{I}(\alpha)$.

\begin{remark}
    Note that if $f \in \mathcal{BC}_I (\alpha)$, then $f=1/g$, where $g \in \mathcal{C}(\alpha)$. As a result, every function in the class $\mathcal{BC}_I (\alpha)$ is univalent.
\end{remark}

\begin{example}
    Define $g:\mathbb{D}\setminus\{0\} \to\mathbb{C}$ by
    $$g(z)=-\dfrac{1}{\log(1-z)},$$ and  $f(z)=1/g(z).$
    Then
    $$\real\left(1+\dfrac{zf''(z)}{f'(z)}\right) = \real\left(\dfrac{1}{1-z}\right)>\dfrac{1}{2},$$
    which implies $f \in \mathcal{C}(1/2)$ and hence $g \in \mathcal{BC}_{I}(1/2)$.
\end{example}

\begin{example}
    Fix $\alpha \neq 1/2$. Consider the function
    $$g(z) = \dfrac{\eta}{1-(1-z)^\eta },$$
    where $\eta=2\alpha-1,$ and $f(z)=1/g(z)$. Note that
    \begin{equation*}
            \real \left(1+\dfrac{zf''(z)}{f'(z)}\right) =\real \left(1+\dfrac{z(1-\eta)}{1-z}\right)= \real \left(1+ \dfrac{2z(1-\alpha)}{1-z}\right)>\alpha,
    \end{equation*}
    which implies $f \in \mathcal{C(\alpha)}$ and so $g \in \mathcal{BC}_{I}(\alpha).$ 
\end{example}
Now we state the following result, which studies some inclusion properties and a characterization of the class $\mathcal{BC}_I( \alpha)$.
\begin{theorem}\label{MAINT5}
    The class $\mathcal{BC}_{I}(\alpha)$ possesses the following properties.
    \begin{enumerate}[label=(\roman*)]
    \item For $\alpha>\beta$, $\mathcal{BC}_I(\alpha) \subset \mathcal{BC}_I(\beta).$ 
        \item $\mathcal{BC}_I( \alpha)$ is invariant under any non zero linear transformation $T(z) = \lambda z$.
        
        \item For any $\alpha \in [0,1)$, the class $\mathcal{BC}_{I}(\alpha)$ is properly contained in $\mathcal{BS}^*(0).$

        \item $g \in \mathcal{BC}_{I}(\alpha)$ if and only if $g$ is univalent and 
\begin{equation}\label{EQT3}
    \real\left(1+\dfrac{zg''(z)}{g'(z)}-\dfrac{2zg'(z)}{g(z)}\right) \geq \alpha.
\end{equation}
       
    \end{enumerate}
\end{theorem}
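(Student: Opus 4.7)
My plan is to derive the characterization (iv) first, since it is the algebraic engine behind the other three statements. Write any $g \in \mathcal{B}$ as $g(z) = 1/z + a_0 + a_1 z + \cdots$ and set $h := 1/g$; then $h$ is analytic in $\mathbb{D}$ with $h(0) = 0$ and $h'(0) = 1$. A direct computation of $h'$ and $h''$ from $h = 1/g$ gives
\[
\frac{h''(z)}{h'(z)} = \frac{g''(z)}{g'(z)} - \frac{2g'(z)}{g(z)},
\]
so that $1 + zh''(z)/h'(z) = 1 + zg''(z)/g'(z) - 2zg'(z)/g(z)$. Moreover, $g$ is univalent in $\mathbb{D}\setminus\{0\}$ if and only if $h$ is univalent in $\mathbb{D}$ (for the only non-routine direction, the equality $h(z_1)=h(0)=0$ forces $g(z_1)=\infty$, hence $z_1=0$). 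By Definition \ref{DEFALPHAINVCONV}, $g \in \mathcal{BC}_I(\alpha)$ is equivalent to $h \in \mathcal{C}(\alpha)$, i.e.\ to $h$ being univalent and $\real(1+zh''/h')\geq \alpha$; translating the latter inequality through the identity above yields exactly (\ref{EQT3}).

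For (i), the inequality defining $\mathcal{C}(\alpha)$ immediately forces $\mathcal{C}(\alpha) \subset \mathcal{C}(\beta)$ whenever $\alpha > \beta$, and since $\mathcal{BC}_I(\alpha)$ is the image of $\mathcal{C}(\alpha)$ under $g \mapsto 1/g$, the inclusion transfers directly. For (ii), the logarithmic-derivative type quantities $zg''/g'$ and $zg'/g$ are unchanged under $g \mapsto \lambda g$ for any nonzero $\lambda$, so the analytic criterion (\ref{EQT3}) from (iv) is preserved; univalence is obviously preserved as well, which gives the asserted invariance.

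For (iii), I would invoke the classical inclusion $\mathcal{C} \subset \mathcal{S}^*(1/2) \subset \mathcal{S}^*(0)$. If $f \in \mathcal{BC}_I(\alpha)$ with $f = 1/g$ and $g \in \mathcal{C}(\alpha) \subset \mathcal{C}$, then $g \in \mathcal{S}^*(0)$, so the one-to-one correspondence (\ref{fand1/f}) delivers $f \in \mathcal{BS}^*(0)$. For strictness, I would exhibit the Koebe function $k(z) = z/(1-z)^2$: since $k \in \mathcal{S}^* \setminus \mathcal{C}$, its reciprocal $1/k = 1/z - 2 + z$ lies in $\mathcal{B}\cap\mathcal{BS}^*(0)$, but cannot belong to $\mathcal{BC}_I(\alpha)$, because the equation $g \cdot (1/k) = 1$ uniquely forces $g = k \notin \mathcal{C}(\alpha)$. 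The whole argument is essentially computational once (iv) is in hand; the only point requiring care is (ii), where the normalization of $\mathcal{B}$ is not literally preserved under scalar multiplication, so the invariance is best read through the analytic characterization of (iv) rather than through Definition \ref{DEFALPHAINVCONV} directly.
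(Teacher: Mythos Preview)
Your proof is correct and follows essentially the same route as the paper: both arguments rest on the identity $zf''/f' = zg''/g' - 2zg'/g$ for $f=1/g$, use the inclusion $\mathcal{C}(\alpha)\subset\mathcal{C}(\beta)$ for (i), and exhibit the reciprocal of the Koebe function for the strictness in (iii). The only notable difference is in (ii): the paper argues directly from Definition~\ref{DEFALPHAINVCONV} by claiming that $\mathcal{C}(\alpha)$ is invariant under nonzero scalar multiplication (so that $f/\lambda\in\mathcal{C}(\alpha)$), whereas you route the argument through the analytic criterion (iv), observing that $zg''/g'$ and $zg'/g$ are unchanged under $g\mapsto\lambda g$. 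Your approach has the virtue of sidestepping the normalization issue you correctly flag---$\lambda g$ does not literally lie in $\mathcal{B}$ and $f/\lambda$ does not literally lie in $\mathcal{A}$---which the paper's argument glosses over.
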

Next, we state the following result, which gives a radius for which every function in $\mathcal{M}$ is to be in $\mathcal{BC}_{I}(\alpha)$. 
\begin{theorem}\label{MAINT2}
Let $g \in \mathcal{M}$. Then $g$ is meromorphically inverse convex of order $\alpha$ on the disk of radius $r_\alpha$, where $r_\alpha$ is the unique zero in $(0,1)$ of the polynomial
\begin{equation}\label{POLYRADIUS}
    P_\alpha(x):=(-1-\alpha)x^2+4x+\alpha-1.
\end{equation}
This is false for every $r>r_\alpha$.
\end{theorem}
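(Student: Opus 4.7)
\emph{Proof plan.}
By part (iv) of Theorem~\ref{MAINT5}, and since any $g\in\mathcal{M}$ is automatically univalent on $\mathbb{D}$, showing that $g$ is meromorphically inverse convex of order $\alpha$ on the disk $|z|<r$ amounts (after the scaling $z\mapsto rz$) to verifying
$$\real\left(1+\frac{zg''(z)}{g'(z)}-\frac{2zg'(z)}{g(z)}\right)\ge\alpha\qquad\text{for all }|z|<r.$$
I would recast this as a classical question about $\mathcal{S}$. Setting $f(z)=1/g(z)$, the correspondence $g\in\mathcal{M}\Leftrightarrow 1/g\in\mathcal{S}$ recalled in the introduction gives $f\in\mathcal{S}$. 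Logarithmic differentiation of $g=1/f$ yields
$$\frac{g''(z)}{g'(z)}=\frac{f''(z)}{f'(z)}-\frac{2f'(z)}{f(z)},\qquad\frac{g'(z)}{g(z)}=-\frac{f'(z)}{f(z)},$$
and substituting these into the left-hand side above telescopes (the $-2zf'/f$ terms cancel) to
$$1+\frac{zg''(z)}{g'(z)}-\frac{2zg'(z)}{g(z)}=1+\frac{zf''(z)}{f'(z)}.$$
Thus the theorem reduces to identifying the largest $r\in(0,1)$ for which every $f\in\mathcal{S}$ is convex of order $\alpha$ on $|z|<r$.

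For the quantitative bound I would invoke the classical estimate obtained by applying $|a_2|\le 2$ to the Koebe transform
$$F(\zeta)=\frac{f\!\left(\frac{\zeta+z}{1+\bar z\zeta}\right)-f(z)}{(1-|z|^2)f'(z)}\in\mathcal{S}$$
of $f$. A short computation shows the second Taylor coefficient of $F$ to be $\tfrac{1}{2}\!\left[(1-|z|^2)f''(z)/f'(z)-2\bar z\right]$, so $|a_2|\le 2$ translates into
$$\left|\frac{zf''(z)}{f'(z)}-\frac{2|z|^2}{1-|z|^2}\right|\le\frac{4|z|}{1-|z|^2}.$$
Taking real parts and writing $r=|z|$,
$$\real\left(1+\frac{zf''(z)}{f'(z)}\right)\ge\frac{1-4r+r^2}{1-r^2},$$
and requiring the right-hand side to be at least $\alpha$ is, after clearing denominators, exactly $P_\alpha(r)\le 0$. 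Since $P_\alpha$ is a downward parabola with vertex at $2/(1+\alpha)\ge 1$, it is strictly increasing on $[0,1]$; combined with $P_\alpha(0)=\alpha-1<0$ and $P_\alpha(1)=2>0$, this produces a unique root $r_\alpha\in(0,1)$ with $P_\alpha\le 0$ precisely on $[0,r_\alpha]$. This settles the positive half.

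For sharpness I would test the Koebe function $K(z)=z/(1-z)^2\in\mathcal{S}$, whose reciprocal $(1-z)^2/z$ lies in $\mathcal{M}$. A direct calculation gives
$$1+\frac{zK''(z)}{K'(z)}=\frac{1+4z+z^2}{1-z^2},$$
which at the real point $z=-r$ equals $(1-4r+r^2)/(1-r^2)$, exactly saturating the lower bound above. For any $r>r_\alpha$ this value drops strictly below $\alpha$, so the inequality characterising membership in $\mathcal{BC}_I(\alpha)$ already fails at $z=-r$, proving that no larger radius works. The only non-routine step in the whole argument is spotting the algebraic collapse $1+zg''/g'-2zg'/g=1+zf''/f'$; once that identity is in hand, the Koebe-transform estimate and the extremal function complete both halves mechanically.
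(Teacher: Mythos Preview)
Your proof is correct and follows essentially the same approach as the paper: both reduce the question via the identity $1+zg''/g'-2zg'/g=1+zf''/f'$ (the paper's relation~\eqref{DIFFEQN6}) to the classical estimate $\bigl|\,zf''/f'-2r^2/(1-r^2)\,\bigr|\le 4r/(1-r^2)$ for $f\in\mathcal{S}$ (which the paper simply cites from \cite[p.~32]{DUR83}), then analyse $P_\alpha$ identically, and use the Koebe function for sharpness. The only cosmetic differences are that you derive the Koebe-transform inequality explicitly and evaluate the extremal function at $z=-r$, whereas the paper invokes a ``suitable rotation'' of $g(z)=z+1/z-2$ to achieve the same effect.
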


Finally, we  obtain the following relation between the classes $\mathcal{BC}_{I}(\alpha)$ and $\mathcal{BS}^*(\alpha)$. 
\begin{theorem}\label{MAINT4}
   A function  $g \in \mathcal{BC}_{I}(\alpha)$ if and only if $\dfrac{1}{z(1/g)'} \in \mathcal{BS}^*(\alpha)$.
\end{theorem}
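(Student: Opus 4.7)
The plan is to reduce the statement to the classical Alexander characterization combined with the reciprocal correspondence \eqref{fand1/f} between $\mathcal{S}^*(\alpha)$ and $\mathcal{BS}^*(\alpha)$. The whole argument then reads as a chain of three equivalences.

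First I would set $h := 1/g$, so that Definition \ref{DEFALPHAINVCONV} becomes $g \in \mathcal{BC}_I(\alpha) \iff h \in \mathcal{C}(\alpha)$. Applying Alexander's theorem (as recalled in the introduction) then gives $h \in \mathcal{C}(\alpha) \iff zh' \in \mathcal{S}^*(\alpha)$. Noting the trivial identity $(1/g)' = h'$, so that $zh' = z(1/g)'$, an application of \eqref{fand1/f} with $f = z(1/g)'$ yields the final link
\[
z(1/g)' \in \mathcal{S}^*(\alpha) \iff \frac{1}{z(1/g)'} \in \mathcal{BS}^*(\alpha).
\]
Concatenating these three biconditionals is the whole proof.

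The only item that needs care is checking that the standing normalization and regularity assumptions match at every step. In the forward direction, $h = 1/g \in \mathcal{C}(\alpha) \subset \mathcal{S}$ is analytic in $\mathbb{D}$ with $h(0) = 0$ and $h'(0) = 1$, so $zh' \in \mathcal{A}$ and Alexander applies verbatim; the correspondence \eqref{fand1/f} then places $1/(zh')$ in $\mathcal{BS}^*(\alpha)$. In the reverse direction, membership of $1/(z(1/g)')$ in $\mathcal{BS}^*(\alpha) \subset \mathcal{M}$ forces $z(1/g)'$ to be univalent and normalized, placing it in $\mathcal{S}^*(\alpha)$, so $h = 1/g$ is automatically analytic in $\mathbb{D}$ and normalized, and one runs the equivalences backwards to conclude $g = 1/h \in \mathcal{BC}_I(\alpha)$. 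Since the argument is purely formal given the results already recorded in the paper, I do not anticipate any significant obstacle; the only thing to double-check is the transparent identity $z(1/g)' = zh'$, which makes the normalization bookkeeping clean.
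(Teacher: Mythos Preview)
Your proposal is correct and follows essentially the same route as the paper: set $f=1/g$, use Alexander's characterization to pass from $f\in\mathcal{C}(\alpha)$ to $zf'\in\mathcal{S}^*(\alpha)$, and then invoke the reciprocal correspondence \eqref{fand1/f} to land in $\mathcal{BS}^*(\alpha)$. The paper writes the Alexander step out as the direct computation $\real(zh'/h)\geq\alpha$ with $h=zf'$ rather than citing the theorem by name, and it only spells out the forward implication; your explicit treatment of both directions and of the normalization bookkeeping is a mild improvement in presentation, not a change of method.
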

\section{Preliminary results}

 We have the following result from \cite[p. 259]{DUR83}, which establishes a connection between the Schwarzian derivative of complex-valued functions and a second-order ordinary linear differential equation.

\begin{lemma}\label{DURENTHEOREM1}
    Let $p$ be an analytic function and $f$ be a function with the Schwarzian derivative $S_f=2p$. Then $f$ is of the form 
    \begin{equation}\label{U/V}
        f(z) = \dfrac{u(z)}{v(z)},
    \end{equation}
where $u$ and $v$ are arbitrary linearly independent solutions of the ordinary linear differential equation $y''+py=0
$.
\end{lemma}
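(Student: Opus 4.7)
The plan is to prove both directions of the representation: first verify that for any two linearly independent solutions $u,v$ of $y''+py=0$, the ratio $u/v$ has Schwarzian $2p$, and then, given $f$ with $S_f=2p$, explicitly construct such a pair $(u,v)$ from $f$ so that $f=u/v$.

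For the forward direction, the idea is to exploit the fact that if $u,v$ both satisfy $y''+py=0$, then their Wronskian $W=u'v-uv'$ is constant, since $W'=u''v-uv''=-puv+upv=0$. Writing $f=u/v$, I would compute $f'=W/v^{2}$, hence $f''/f'=-2v'/v$. Differentiating and substituting $v''=-pv$ into $(f''/f')'$ would then give
\begin{equation*}
S_f=\left(\frac{f''}{f'}\right)'-\frac12\left(\frac{f''}{f'}\right)^{2}=-2\frac{v''}{v}+2\left(\frac{v'}{v}\right)^{2}-2\left(\frac{v'}{v}\right)^{2}=2p,
\end{equation*}
which is the desired identity.

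For the converse, which is the substantive content, the key construction is to set $v:=1/\sqrt{f'}$ (on a simply connected domain where $f'\neq 0$, so a branch of the square root exists). A direct computation gives $v'/v=-f''/(2f')$, and differentiating once more produces
\begin{equation*}
\frac{v''}{v}=\left(\frac{v'}{v}\right)'+\left(\frac{v'}{v}\right)^{2}=-\frac12\left(\frac{f''}{f'}\right)'+\frac14\left(\frac{f''}{f'}\right)^{2}=-\frac{S_f}{2}=-p,
\end{equation*}
so that $v''+pv=0$. Then I would define $u:=fv$ and verify, using $v''=-pv$ together with the already-established relation $f''v+2f'v'=0$, that $u''+pu=0$ as well. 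Linear independence is automatic: a short computation shows $u'v-uv'=f'v^{2}=f'\cdot(1/f')=1$, so the Wronskian is the nonzero constant $1$. By construction $f=u/v$.

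To finish the ``arbitrary'' part of the statement, I would note that any other pair $(\tilde u,\tilde v)$ of linearly independent solutions of $y''+py=0$ is obtained from $(u,v)$ by a nonsingular constant linear change of basis, so $\tilde u/\tilde v$ is a M\"obius transform of $u/v=f$; in particular, by the invariance $S_{T\circ f}=S_f$ under bilinear $T$ stated in \eqref{SCHWARZINVARIANT}, every such ratio still has Schwarzian $2p$, and conversely $f$ can be written as a ratio of some linearly independent pair of solutions. The main delicate point I expect is the computation of $v''/v$, where one must carefully collect the cross terms so that they assemble exactly into the definition of $S_f$; the rest is straightforward linear ODE theory.
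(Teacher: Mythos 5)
Your argument is correct and is essentially the standard proof found in the cited reference (Duren, \emph{Univalent Functions}, p.~259): the paper itself states this lemma without proof, so there is nothing to diverge from. Both directions check out --- the constancy of the Wronskian, the computation $v''/v=-S_f/2$ for $v=(f')^{-1/2}$, and the verification that $u=fv$ solves the same equation with $u'v-uv'=1$ --- and you rightly flag the only delicate points, namely the choice of a branch of $\sqrt{f'}$ and the M\"obius ambiguity in the pair $(u,v)$, which is handled by the invariance \eqref{SCHWARZINVARIANT}.
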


\medskip

By adopting a similar method of proof as outlined in \cite[Lemma II]{ HAI56},  we arrive at the following result.
\begin{lemma}\label{OURLEMMA1}
    Let $\alpha \in [0,1)$ and $g$ be a continuously differentiable function on $[0,1)$ satisfying the conditions $g(0)=0$,   $g'(0)\neq 0$. Then for given $\epsilon>0$, there exists a $\delta>0$ such that for any $r \in (1-\delta,1)$,
    $$r\int_0^r (g'(x))^2 dx \geq r \int_0^r q_\alpha(x)g^2(x) dx + \left(\dfrac{1+\alpha}{2}-\epsilon\right) g^2(r),$$
   where $q_\alpha \in \mathcal{P}((1+\alpha)/2).$  
\end{lemma}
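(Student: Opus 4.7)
The plan is to exploit the positive solution $y$ of the ODE $y''+q_\alpha y=0$ on $(0,1)$ guaranteed by the hypothesis $q_\alpha\in\mathcal{P}((1+\alpha)/2)$; this $y$ satisfies $y(0)=0$, $y'(0)=1$, is positive on $(0,1)$, and obeys the boundary behaviour $\lim_{x\to 1^-} y'(x)/y(x)\geq (1+\alpha)/2$. The strategy mirrors the argument of \cite[Lemma II]{HAI56}: convert the $q_\alpha$-weighted integral of $g^2$ into a boundary term by integration by parts driven by the ODE, and then dominate the resulting cross terms by $(g')^2$ using a completed square.

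Concretely, I would begin with the identity
\begin{equation*}
\left(\frac{g^2 y'}{y}\right)' \;=\; 2gg'\,\frac{y'}{y} \;-\; q_\alpha g^2 \;-\; g^2\!\left(\frac{y'}{y}\right)^{\!2},
\end{equation*}
obtained by differentiating and substituting $y''=-q_\alpha y$. Integrating from $0$ to $r$, the boundary contribution at $0$ vanishes, because $g\in C^1$ with $g(0)=0$ makes $g^2(x)=O(x^2)$ while $y(x)\sim x$ and $y'(x)\to 1$, so $g^2(x)y'(x)/y(x)=O(x)\to 0$. This isolates
\begin{equation*}
\int_0^r q_\alpha(x)\, g^2(x)\, dx \;=\; \int_0^r\!\Bigl(2gg'\tfrac{y'}{y} - g^2\bigl(\tfrac{y'}{y}\bigr)^{\!2}\Bigr) dx \;-\; \frac{g^2(r)\,y'(r)}{y(r)}.
\end{equation*}

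The next step is the pointwise inequality $(g'-g\,y'/y)^{2}\geq 0$, equivalently
\begin{equation*}
(g')^{2} \;\geq\; 2gg'\,\frac{y'}{y} - g^2\!\left(\frac{y'}{y}\right)^{\!2},
\end{equation*}
which when integrated and combined with the previous display yields
\begin{equation*}
\int_0^r (g'(x))^{2}\, dx \;\geq\; \int_0^r q_\alpha(x)\, g^{2}(x)\, dx \;+\; \frac{g^{2}(r)\, y'(r)}{y(r)}.
\end{equation*}
Multiplying through by $r$ produces the shape of the desired inequality, with coefficient $r y'(r)/y(r)$ in front of $g^2(r)$.

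The only remaining point, and the one I expect to need a little care, is the passage from the inequality $\lim_{x\to 1^-} y'(x)/y(x)\geq (1+\alpha)/2$ to the coefficient $(1+\alpha)/2-\epsilon$ claimed in the lemma. Since $r\to 1$ forces $r\cdot y'(r)/y(r)$ to share the same $\liminf$, for any prescribed $\epsilon>0$ there is a $\delta>0$ such that $r\in(1-\delta,1)$ yields $r\,y'(r)/y(r)\geq (1+\alpha)/2-\epsilon$, and the lemma follows. The genuinely delicate point is therefore the interplay between the non-strict limit assumption in the definition of $\mathcal{P}((1+\alpha)/2)$ and the loss caused by the factor $r<1$; both are absorbed comfortably into the $\epsilon$, which is precisely why the lemma is only stated near the boundary.
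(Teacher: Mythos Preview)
Your proposal is correct and follows essentially the same route as the paper: both arguments hinge on the nonnegativity of $\bigl(g'-g\,y'/y\bigr)^2$ together with the identity $(y'/y)'=y''/y-(y'/y)^2=-q_\alpha-(y'/y)^2$, integrated by parts to produce the boundary term $g^2(r)\,y'(r)/y(r)$, which is then bounded below via the limit hypothesis on $y'/y$. The only cosmetic difference is packaging---you encode the integration by parts in the single identity $\bigl(g^2 y'/y\bigr)'=2gg'\,y'/y-q_\alpha g^2-g^2(y'/y)^2$ up front, whereas the paper first expands $\int_0^r T^2\,dx\geq 0$ with $T=g'-g\,y'/y$ and then integrates $\int_0^r g^2(y'/y)^2\,dx$ by parts; the resulting inequalities are identical.
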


\begin{proof}
    Let $y$ be the solution of the differential equation \eqref{DIFFEQ} satisfying the conditions $y(0)=0$ and $y'(0)=1$. Define the function $T$ on the interval $[0,1)$ by
    $$T(x):=g'(x)-\dfrac{y'(x)g(x)}{y(x)},$$
    which is obviously continuous since $y$ only vanishes at $0$ and $g(0)=0$.
    Since $q_\alpha \in \mathcal{P}((1+\alpha)/2)$, we have
    $$\lim_{x\to1^-}\dfrac{xy'(x)}{y(x)}=\lim_{x\to1^-}\dfrac{y'(x)}{y(x)} \geq \dfrac{1+\alpha}{2}.$$
    Then for given $\epsilon>0$, there exists a $\delta>0$ such that for all $r \in (1-\delta,1),$ we have
    \begin{equation}\label{LIMINEQEPSILON}
        \dfrac{ry'(r)}{y(r)}>\dfrac{1+\alpha}{2}-\epsilon.
    \end{equation}
    Note that due to the continuity of the function $T$, the integral $\int_0^r T^2(x) dx$ exists for any $r \in (1-\delta,1)$. Now from the fact that $\int_0^r T^2(x) dx \geq 0$, we have
    \begin{equation}\label{INTEQ1}
        \int_0^r (g'(x))^2dx - 2 \int_0^r \dfrac{g'(x)y'(x)g(x)}{y(x)} dx+ \int_0^r\left(\dfrac{y'(x)g(x)}{y(x)}\right)^2 dx \geq 0.
    \end{equation}
    Now note that 
    $$\dfrac{d}{dx} \left(\dfrac{y'(x)}{y(x)}\right) = \dfrac{y''(x)}{y(x)} - \left(\dfrac{y'(x)}{y(x)}\right)^2,$$
    which gives

    $$\int_0^r\left(\dfrac{y'(x)g(x)}{y(x)}\right)^2 dx = \int_0^r g^2(x) \left[\dfrac{y''(x)}{y(x)}-\dfrac{d}{dx} \left(\dfrac{y'(x)}{y(x)}\right)\right]dx.$$
 Then using the integration by parts, we obtain
 $$\int_0^r\left(\dfrac{y'(x)g(x)}{y(x)}\right)^2 dx = \int_0^r \dfrac{g^2(x)y''(x)}{y(x)}dx-\dfrac{g^2(r)y'(r)}{y(r)}+2 \int_0^r \dfrac{g'(x)y'(x)g(x)}{y(x)} dx,$$
 which reduce the inequality \eqref{INTEQ1} to
 $$ \int_0^r (g'(x))^2dx + \int_0^r \dfrac{g^2(x)y''(x)}{y(x)}dx-\dfrac{g^2(r)y'(r)}{y(r)}\geq0.$$
Now using the fact $y''(x)/y(x)=-q_\alpha(x)$ and along with the inequality \eqref{LIMINEQEPSILON}, we obtain that
$$\int_0^r (g'(x))^2dx \geq \int_0^r g^2(x)q_\alpha(x)dx+\left(\dfrac{1+\alpha}{2}-\epsilon\right)\dfrac{g^2(r)}{r},$$
which gives our desired inequality.
\end{proof}

\medskip
We are restating the following lemma from \cite{ARO18}, which plays an important role in proving Theorem \ref{MAINT1}.
\begin{lemma}\label{LEMMAFORFUNCTIONRATIO}
    Let $f \in \mathcal{B}$ be a locally univalent function with the Schwarzian derivative $S_f=2p$, where $p$ is analytic on $\mathbb{D}$. Also, assume $f=u/v$, where $u$ and $v$ are linearly independent solutions of the differential equation $y''+py=0$. Then 
    $$f\in \mathcal{BC}(\alpha) \Leftrightarrow v \in \mathcal{S}^*((1+\alpha)/2).$$ 
\end{lemma}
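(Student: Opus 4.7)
The plan is to invoke Lemma \ref{DURENTHEOREM1} to write $f = u/v$ with linearly independent $u, v$ solving $y'' + py = 0$, and then to translate the meromorphic convexity condition $\real(1 + zf''/f') \leq -\alpha$ into a starlikeness condition on $v$ via a direct computation of $f''/f'$. The first concrete step is to compute $f' = (u'v - uv')/v^2 = -W/v^2$, where $W = uv' - u'v$ is the Wronskian. Since $u'' = -pu$ and $v'' = -pv$, one has $W' = uv'' - u''v = -puv + puv = 0$, so $W$ is a non-zero constant on $\mathbb{D}$.

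Next, I would fix the normalization of the pair $(u, v)$. Because $f(z) = 1/z + a_0 + a_1 z + \cdots$ has a simple pole at the origin with residue $1$, we must have $v(0) = 0$; uniqueness for the linear ODE forces $v'(0) \neq 0$, and matching the residue gives $u(0) = v'(0)$. Rescaling both $u$ and $v$ by the common factor $1/v'(0)$ leaves the quotient $u/v = f$ unchanged and yields $v(0) = 0$, $v'(0) = 1$, $u(0) = 1$, so $W \equiv 1$. Consequently $f' = -1/v^2$ and $f''/f' = -2v'/v$, producing the key identity
$$1 + \frac{zf''(z)}{f'(z)} = 1 - \frac{2zv'(z)}{v(z)}.$$
Taking real parts, the inequality $\real\bigl(1 + zf''/f'\bigr) \leq -\alpha$ becomes $\real(zv'/v) \geq (1+\alpha)/2$, which is precisely the analytic condition defining $\mathcal{S}^*((1+\alpha)/2)$.

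The main point requiring care is ensuring that $v$ actually lies in $\mathcal{S}$, so that membership in $\mathcal{S}^*((1+\alpha)/2)$ is meaningful. The normalization $v(0) = 0$, $v'(0) = 1$ is already in hand from the previous step, and univalence follows automatically: a classical fact in the theory of starlike functions asserts that any analytic $v$ on $\mathbb{D}$ with $v(0) = 0$, $v'(0) = 1$ and $\real(zv'/v) > 0$ is univalent (indeed starlike). Since $(1+\alpha)/2 \geq 1/2 > 0$ for every $\alpha \in [0,1)$, this hypothesis is met in both directions of the claimed equivalence, so the chain of implications closes and the proof is complete. I expect the only subtle aspect to be the bookkeeping of the normalization, since the quotient representation $f = u/v$ is not unique; once this is fixed as above, everything reduces to the one identity displayed.
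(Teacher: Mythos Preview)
Your argument is correct. The paper does not supply its own proof of this lemma; it merely restates it from \cite{ARO18}. The derivation you give---normalizing so that $v(0)=0$, $v'(0)=1$, observing that the Wronskian $W=uv'-u'v$ is then identically $1$, whence $f'=-1/v^2$ and
\[
1+\frac{zf''(z)}{f'(z)}=1-\frac{2zv'(z)}{v(z)},
\]
and finally reading off the equivalence by taking real parts---is exactly the computation the authors themselves use elsewhere in the paper: it appears explicitly in the worked example following Theorem~\ref{MAINT1} (where they record $W(u,v)\equiv 1$ and the same identity) and underlies the proof of Theorem~\ref{MAINT1}, where the initial conditions $v(0)=0$, $v'(0)=1$ are imposed. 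Your remarks on fixing the normalization of the pair $(u,v)$ and on the automatic univalence of $v$ once $\real(zv'/v)\ge (1+\alpha)/2>0$ are sound and make explicit what the lemma statement leaves implicit.
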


 Now, we give the following lemma, which discusses a sufficient condition for a non-negative continuous function defined on $[0,1)$ to be included in the class $\mathcal{P}(\alpha)$. Since the proof is similar to that of \cite[Theorem 1]{CHUA11}, we omit it.

\begin{lemma}\label{OURLEMMA2}
    Let $q:[0,1)\to \mathbb{R}$ be a continuous non-negative function with 
    $$\int_0^1 q(x) dx \leq c,$$
where $c\leq 1$.    Then $q \in \mathcal{P}(\alpha)$ for every $\alpha \leq1-c$. For any $\alpha>1-c$, $q$ need not belong to the class $\mathcal{P}(\alpha)$.
\end{lemma}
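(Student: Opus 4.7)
The plan is to study the unique solution $y$ of the initial-value problem $y''+qy=0$, $y(0)=0$, $y'(0)=1$, and verify both properties required by the class $\mathcal{P}(\alpha)$: positivity of $y$ on $(0,1)$, and the lower bound $\lim_{x\to 1^-}y'(x)/y(x)\geq\alpha$. My main analytic tool will be the integrated identity $y'(x)=1-\int_0^x q(t)y(t)\,dt$, coupled with the concavity bound that follows from $y''=-qy\leq 0$ wherever $y\geq 0$: on such an interval $y'$ is non-increasing, so $y'(x)\leq 1$, and hence $y(x)\leq x\leq 1$.

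For positivity, I would set $x^*:=\sup\{b\in(0,1):y>0\text{ on }(0,b)\}$; since $y'(0)=1$, clearly $x^*>0$. On $(0,x^*)$, combining the two bounds above yields
\begin{equation*}
y'(x)\;=\;1-\int_0^x q(t)y(t)\,dt\;\geq\;1-\int_0^x q(t)\,dt\;\geq\;1-c\;\geq\;0,
\end{equation*}
so $y$ is non-decreasing on $[0,x^*]$. Because $y'(0)=1$ gives $y>0$ on some $(0,\epsilon)$, non-decrease forces $y(x^*)\geq y(\epsilon)>0$, ruling out $x^*<1$ (where we would have $y(x^*)=0$ by the definition of $x^*$). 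Thus $y>0$ on $(0,1)$.

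For the limit, note that on $(0,1)$ the function $y'$ is non-increasing and bounded below by $1-c$, while $y$ is non-decreasing and bounded above by $1$; hence $L:=\lim_{x\to 1^-}y'(x)$ and $M:=\lim_{x\to 1^-}y(x)$ both exist with $L\in[1-c,1]$ and $M\in(0,1]$. Therefore
\begin{equation*}
\lim_{x\to 1^-}\frac{y'(x)}{y(x)}\;=\;\frac{L}{M}\;\geq\;\frac{1-c}{1}\;=\;1-c\;\geq\;\alpha,
\end{equation*}
establishing $q\in\mathcal{P}(\alpha)$.

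For sharpness, given any $\alpha>1-c$, I would concentrate $q$ near $x=1$: take $q_\epsilon$ to be a continuous non-negative bump supported in $(1-2\epsilon,1-\epsilon)$ with $\int q_\epsilon=c$. Then $y$ is exactly linear with slope $1$ on $[0,1-2\epsilon]$, so $y(1-2\epsilon)=1-2\epsilon$; on the bump, $\int q_\epsilon y\approx c\cdot 1=c$, so $y'$ drops from $1$ to approximately $1-c$ while $y$ remains close to $1$; and $y$ is linear of slope $\approx 1-c$ on $[1-\epsilon,1)$. A direct computation shows $M\to 1$ and $L\to 1-c$ as $\epsilon\to 0$, so $L/M$ can be made strictly less than $\alpha$ by choosing $\epsilon$ small. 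The main obstacle is this sharpness construction: one must verify that $y$ remains positive throughout the bump interval (which follows from concavity and the initial data at $1-2\epsilon$) and pin down the two limits quantitatively. The positivity and limit steps themselves are cleaner; their key subtlety is that one uses both $y(t)\leq t\leq 1$ and the hypothesis $\int_0^x q\leq c$ to squeeze $y'$ from below by the non-negative constant $1-c$, which critically requires the normalization $c\leq 1$.
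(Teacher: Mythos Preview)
Your argument is correct. The paper itself omits the proof of this lemma, pointing to \cite[Theorem~1]{CHUA11}; however, the approach you take---integrating $y''=-qy$ to get $y'(x)=1-\int_0^x q(t)y(t)\,dt$, using concavity on the positivity set to obtain $y(t)\le t\le 1$, and combining these to force $y'\ge 1-c\ge 0$---is precisely the standard one, and indeed the paper alludes to the step ``$y$ is increasing on $(0,1)$'' when it invokes Lemma~\ref{OURLEMMA2} inside the proof of Theorem~\ref{MAINT1}. Your limit computation $L/M\ge (1-c)/1$ is clean and correct.

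For the sharpness part, your bump construction is valid (and your informal estimates can be made rigorous exactly as you outline, since on the support of the bump one has $1-2\epsilon\le y(t)\le 1$). It is worth noting that the paper, in the sharpness portion of Theorem~\ref{MAINT1}, uses a slightly different but equivalent device: the family $q_\beta(x)=(1-\beta)(n+1)x^n$, which also concentrates all its mass near $x=1$ as $n\to\infty$ and achieves the same limiting ratio $y'/y\to\beta$. Either construction works; yours is arguably more transparent because the solution is piecewise linear outside the bump.
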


\medskip

\section{Proof of the Main Results}
We begin with the proof of Theorem  \ref{MAINT1}.
\subsection{Proof of Theorem \ref{MAINT1}.}
    Let us take $p=S_f/2$, which is clearly analytic on $\mathbb{D}$. Then by  Lemma \ref{DURENTHEOREM1},
    $f$ is of the form \eqref{U/V} and by Lemma \ref{LEMMAFORFUNCTIONRATIO}, to prove $f \in \mathcal{BC}(\alpha)$ it is enough to show  the function $v$ is starlike of order $(1+\alpha)/2$.
Now consider the differential equation
\begin{equation*}\label{DEINPROOF}
    v''+pv=0,
\end{equation*}
with initial conditions  $v(0)=0$ and $v'(0)=1$.
Then we have
\begin{equation}\label{EQ1THM2.10}
    v''\overline{v}+p|v|^2=0.
\end{equation}
Fix $z_0=re^{it_0}$, where $t_0 \in [0,2\pi)$ and $r \neq 0$. Then integrate equation \eqref{EQ1THM2.10} through the ray from $0$ to $z_0$, we have

$$\overline{v(z_0)} v'(z_0) - \int_0^r e^{-it_0} \overline{v'(\rho e^{it_0})}  v'(\rho e^{it_0})  d\rho+ \int_0^re^{it_0} p(\rho e^{it_0}) |v(\rho e^{it_0})|^2 d\rho =0.$$
Now multiplying throughout by $re^{it_0}$, we obtain

$$z_0 |v(z_0)|^2 \frac{v'(z_0)}{v(z_0)}- r\int_0^r |v'(\rho e^{it_0})|^2 d\rho + r\int_0^r e^{2it_0} p(\rho e^{it_0})|v(\rho e^{it_0})|^2  d\rho=0.$$
It follows that
$$|v(z_0)|^2 \real \left(\dfrac{z_0v'(z_0)}{v(z_0)}\right)=r \int_0^r |v'(\rho e^{it_0})|^2 d\rho-r\int_0^r |v(\rho e^{it_0})|^2 \real \left(e^{2it_0} p(\rho e^{it_0})\right)d\rho.$$
From the hypothesis, we have
$$\real \left(e^{2it_0} p(\rho e^{it_0})\right) \leq |e^{2it_0} p(\rho e^{it_0})| = |p(\rho e^{it_0})| = |S_f(\rho e^{it_0})|/2 \leq q_\alpha(\rho).$$
Then we obtain
\begin{equation}\label{INEQFROMSF}
    |v(z_0)|^2 \real \left(\dfrac{z_0v'(z_0)}{v(z_0)}\right) \geq r \int_0^r |v'(\rho e^{it_0})|^2 d\rho - r\int_0^r q_\alpha(\rho)|v(\rho e^{it_0})|^2 d\rho. 
\end{equation}
Now take arbitrary $\epsilon>0$. Then by applying  Lemma \ref{OURLEMMA1} for the functions $\eta$ and $\zeta$, there exists $\delta>0$ such that for $r \in (1-\delta,1)$, we have
\begin{equation}\label{INEQFROMLEMMA}
    r \int_0^r |v'(\rho e^{it_0})|^2 d\rho \geq |v(z_0)|^2 \left(\dfrac{1+\alpha}{2} -\epsilon \right)+r \int_0^r q_\alpha(\rho)|v(\rho e^{i t_0})|^2 d\rho,
\end{equation}
where $v=\eta+i\zeta$.
Applying the inequality (\ref{INEQFROMLEMMA}) in (\ref{INEQFROMSF}) to obtain
$$\real \left(\dfrac{z_0v'(z_0)}{v(z_0)}\right) \geq \dfrac{1+\alpha}{2} -\epsilon.$$
Since the function $\real (zv'(z)/v(z))$ is a harmonic function, it attains its minimum on the boundary. Thus, for all $z$ such that $|z|\leq r$, we have
$$\real \left(\dfrac{zv'(z)}{v(z)}\right) \geq \dfrac{1+\alpha}{2} -\epsilon.$$
Making $\epsilon \to 0$ and since $r$ is arbitrary in $(0,1)$, we get 
$$\real \left(\dfrac{zv'(z)}{v(z)}\right) \geq \dfrac{1+\alpha}{2}, \text{ for }|z| < 1,$$
which implies $v \in \mathcal{S}^*((1+\alpha)/2).$
Hence, we obtained $f$ is meromorphically convex of order $\alpha$.

\medskip

Next we will show that the constant $(1+\alpha)/2$ is the best possible. For a fixed $n \in \mathbb{N}$ and $\beta \in [0,1)$, we define the following function
$$q_\beta(z) := (1- \beta) (n+1)z^n, \, z \in \mathbb{D}.$$ 
Integrating to obtain
$$\int_0^1 q_\beta(t)dt =1- \beta,$$
which shows that $q_\beta \in \mathcal{P}(\beta)$. Now let $y$ be the solution of the differential equation
\begin{equation}\label{THMDIFFEQ}
    y''+q_\beta y =0, \, y(0)=0, \, y'(0)=1.
\end{equation}
Fix $\omega \neq 0 \in \mathbb{D}$ and define the function
$$f(z):= -\int_\omega^z y(s)^{-2}ds,$$
where the integral is taken over an arbitrary path from $\omega$ to $z$.
It is clear that $$f'(z) = \dfrac{-1}{y^2(z)},$$
and
$$f''(z) = \dfrac{2y'(z)}{y^3(z)}.$$
Then the Schwarzian derivative of $f$ is

$$S_f(z) = \left(\dfrac{-2y'(z)}{y(z)}\right)'- \dfrac{1}{2} \left(\dfrac{-2y'(z)}{y(z)}\right)^2=-2\dfrac{y''(z)}{y(z)} =2q_\beta(z) = 2(1-\beta)(n+1)z^n,$$
which gives
$|S_f(z)|=2q_\beta(|z|).$ 

\medskip
Let $0<a<x<1$ and take $\epsilon>0$. For each $n \in \mathbb{N}$, let $y_n$ be the solution of the differential equation \eqref{THMDIFFEQ}. Since for small values of $x$, the term $x^{n+1}$ tends to $0$ quickly, and in this case, $y_n''$ is approximately zero. Hence the solution $y_n$ is linear near the point zero, and as a result, there exists $N \in \mathbb{N}$ such that for all $n \geq N$, we have $y_{n}(a)>a-\epsilon$. Also note that for any $n \in \mathbb{N}$, we have
$$\int_a^x q_\beta(t)dt = (1-\beta) (x^{n+1}-a^{n+1}).$$
Then there exists $M \in \mathbb{N}$ such that for all $n\geq M$, we have
$$\int_a^x q_\beta(t)dt > (1-\beta) (x^{n+1}-\epsilon).$$
Let $N_1 = \max\{N, M\}$ and call the solution $y_{N_1}=y$. By proceeding as similar in the proof of Lemma \ref{OURLEMMA2}, we get $y$ is increasing on $(0,1).$ Hence, we have

$$y'(x) = 1- \int_0^x q_\beta(t)y(t) dt < 1-(a-\epsilon) \int_a^x q_\beta(t) dt < 1-(1-\beta)(a-\epsilon)(x^{n+1}-\epsilon),$$
which gives
$$\lim_{x\to 1^-} \dfrac{y'(x)}{y(x)} \leq \dfrac{1}{a-\epsilon} - (1-\beta)(1-\epsilon).$$
It is clear that we can make the bound less than $\beta$ by tending $a \to 1$ and $\epsilon \to 0$. Then there exists $x_0 \in (0,1)$ such that
$$\dfrac{x_0 y'(x_0)}{y(x_0)}\leq\beta.$$
Now, for any $\beta <(1+\alpha)/2$, we have
$$1+x_0 \dfrac{f''(x_0)}{f'(x_0)} = 1-\dfrac{2x_0y'(x_0)}{y(x_0)}\geq1-2\beta>-\alpha,$$
which implies that $f$ is not meromorphically convex function of order $\alpha$. Hence, the proof is completed.
\hfill{$\Box$}
\subsection{Proof of Theorem \ref{MAINT5}}
Let $g \in \mathcal{BC}_{I}(\alpha)$.
\begin{enumerate}[label=(\roman*)]
    \item The result is obvious from the inclusion property $$\mathcal{C}(\alpha) \subset \mathcal{C}(\beta), \text{ for } \alpha>\beta.$$ 
    \item Since $g \in \mathcal{BC}_{I}(\alpha)$ and $g=1/f$, where $f \in \mathcal{C}(\alpha).$ Define the function $$h(z)=\lambda g(z), \, \lambda \neq 0. $$
    Since the class $\mathcal{C}(\alpha)$ is invariant under the non-zero scalar multiplication, we have  $f/\lambda$ also in $\mathcal{C}(\alpha)$, which implies $h \in \mathcal{BC}_{I}(\alpha)$.
    \item Since $g \in \mathcal{BC}_{I}(\alpha)$, we have $1/g \in \mathcal{C}(\alpha)$. The fact $\mathcal{C}(\alpha) \subset \mathcal{S}^*$, gives  $1/g \in \mathcal{S}^*$. Then by the relation 
\eqref{fand1/f}, we obtain $g \in \mathcal{BS}^*(0)$. Now consider the function $h$ defined on $\mathbb{D}$ by $$h(z) := z+\dfrac{1}{z}-2.$$
Note that $1/h$ is nothing but the well-known Koebe function, which is clearly not in the class $\mathcal{C}(\alpha)$ for any $\alpha \in [0,1)$. So $h \notin \mathcal{BC}_I(\alpha)$ for any $\alpha \in [0,1)$.  But note that $1/h$ is in the class $\mathcal{S}^*$ and by the relation \eqref{fand1/f}, we have $h \in \mathcal{BS}^*(0)$.

\item Since $g \in \mathcal{BC}_{I}(\alpha)$, there exists $f \in \mathcal{C}(\alpha)$ such that for all $z \in \mathbb{D}\setminus\{0\}$,
$$g(z) = \dfrac{1}{f(z)}.$$
Clearly, $g$ is univalent.  
Simple computations give
$$f''(z) = \dfrac{2(g'(z))^2}{g^3(z)}-\dfrac{g''(z)}{g^2(z)}.$$
Then clearly we obtain for all $z \in \mathbb{D}\setminus\{0\}$,
\begin{equation}\label{DIFFEQN6}
    \dfrac{zf''(z)}{f'(z)} = \dfrac{zg''(z)}{g'(z)}-\dfrac{2zg'(z)}{g(z)}.
\end{equation}
Since $f \in \mathcal{C}(\alpha)$ we have the inequality, 
$$1+\real \left(\dfrac{zf''(z)}{f'(z)}\right)\geq\alpha.$$
Then the equation \eqref{DIFFEQN6} gives our desired result. Conversely, if $g$ is univalent and satisfies (\ref{EQT3}), then by taking $f=1/g$, the result is obvious.
\end{enumerate} \hfill{$\Box$}
\subsection{Proof of Theorem \ref{MAINT2}.}
    Let $g \in \mathcal{M}.$ Then $g=1/f$ for some $f \in \mathcal{S}$. From  \cite[p. 32]{DUR83} and the relation \eqref{DIFFEQN6}, we get

    \begin{equation*}
    \begin{split}
        \real \left(\dfrac{2zg'(z)}{g(z)}-\dfrac{zg''(z)}{g'(z)}\right)  &= \real \left(-\dfrac{zf''(z)}{f'(z)} \right)\\ 
        &=\real \left(\dfrac{2r^2}{1-r^2}-\dfrac{zf''(z)}{f'(z)} \right)-\dfrac{2r^2}{1-r^2}\\
        &\leq \left| \dfrac{2r^2}{1-r^2}-\dfrac{zf''(z)}{f'(z)} \right| - \dfrac{2r^2}{1-r^2} \\&\leq\dfrac{4r-2r^2}{1-r^2}.
    \end{split}
    \end{equation*}
    Now consider the polynomial $P_\alpha(x)$ as in the equation (\ref{POLYRADIUS}),
    which is trivially continuous and increasing on $(0,1)$. Also note that $P_\alpha(0)=\alpha-1<0$ and $P_\alpha(1)=2>0$, which gives the existence of exactly one root of $P_\alpha(x)$ in the interval $(0,1)$, say $r_\alpha$. Then for all $z$ such that $|z| \leq r_\alpha$,  we have
    $$\real \left(\dfrac{2zg'(z)}{g(z)}- \dfrac{zg''(z)}{g'(z)}  \right) \leq 1-\alpha.$$
    Then by the analytic characterization \eqref{EQT3}, we proved $g$ is meromorphically inverse convex of order $\alpha$ on the disk of radius $r_\alpha$.

\medskip

    For proving the sharpness of the radius, consider the function $g$ defined by 
    $$g(z) := z+\dfrac{1}{z}-2.$$
    Note that the reciprocal of the function $g$ is nothing but the well-known Koebe function, which is trivially not a convex function. So $g$ cannot be in the class $\mathcal{BC}_I(\alpha)$.
    But simple computations give
    $$\dfrac{2zg'(z)}{g(z)}-\dfrac{zg''(z)}{g'(z)} = -\dfrac{4z+2z^2}{1-z^2}.$$
    Now choose $r \in (0,1)$ such that $r>r_\alpha$. Then we have $P(r)>0$ and as a result
    $$\dfrac{4r-2r^2}{1-r^2}>1-\alpha,$$
    which shows that the bound $r_\alpha$ is sharp for a suitable rotation of the function $g$. The radius $r_\alpha$ is called {\em radius of inverse convexity} for the class $\mathcal{M}$. \hfill{$\Box$}
\subsection{Proof of Theorem \ref{MAINT4}}
 Let $g \in \mathcal{BC}_{I}(\alpha)$. Then there exists a function $f \in \mathcal{C}(\alpha)$ such that $g=1/f$. Since 
$$\real\left(1+\dfrac{zf''(z)}{f'(z)}\right)\geq\alpha,$$
we have
\begin{equation}\label{EQ1T4}
    \real\left(\dfrac{zh'(z)}{h(z)}\right) \geq \alpha,
\end{equation}
where $h(z)=zf'(z)$ and this implies $h \in \mathcal{S}^*(\alpha)$. That is, 
$$h(z) = -\dfrac{zg'(z)}{g^2(z)} \in \mathcal{S}^*(\alpha).$$
Note that if we take $h_1=1/h$, {then} the equation (\ref{EQ1T4}) will be equivalent to
$$\real\left(\dfrac{zh_1'(z)}{h_1(z)}\right)\leq-\alpha,$$
which gives $h_1 \in \mathcal{BS}^*(\alpha)$. This completes the proof. \hfill{$\Box$}


\subsection*{Acknowledgement}
The work of the first author is
	supported by SERB-SRG (SRG/2023-\\/001938) and Faculty Research Grant FRG (NITC/(R\&C)/2023-2024/FRG/Phase III/(68)). The work of second author is supported by an INSPIRE fellowship of the Department of Science and Technology, Govt. of India (DST/INSPIRE Fellowship/2021/IF210612).

\subsection*{Conflict of Interests}
The authors declare that there is no conflict of interests regarding the publication of this paper.

\subsection*{Data Availability Statement}
The authors declare that this research is purely theoretical and does not associate with any datas.

\end{document}